\newtheorem{theorem}{Theorem}[section]
\newtheorem{corollary}{Corollary}[section]
\newtheorem{lemma}[theorem]{Lemma}
\theoremstyle{definition}
\theoremstyle{remark}
\numberwithin{equation}{section}
\begin{document}

\title[On Taylor-like Estimates for $L^{2}$ Polynomial Approximations]{On Taylor-like Estimates for Chebyshev Series Approximations of $e^x$}


\author[Ale\v{s} Wodecki, Shenyuan Ma]{Ale\v{s} Wodecki, Shenyuan Ma}
\address{Czech Technical University in Prague}
\curraddr{Technick\'{a} 2, Prague, Czech Republic}
\email{wodecki.ales@fel.cvut.cz, shenyma@fel.cvut.cz}
\thanks{}

\subjclass[2020]{Primary }

\date{January 12, 2024}

\keywords{Taylor series, Chebyshev series, Polynomial approximation}
\begin{abstract}
Polynomial series approximations are a central theme in approximation theory due to their utility in an abundance of numerical applications. The two types of series, which are featured most prominently, are Taylor series expansions and expansions derived based on families of $L^{2}-$orthogonal polynomials on bounded intervals. Traditionally, however, the properties of these series are studied in an isolated manner and most results on the $L^{2}-$based approximations are restricted to the interval of approximation. In many applications, theoretical guarantees require global estimates. Since these often follow trivially from Taylor's remainder theorem, Taylor's series is often favored in these applications. Its suboptimal numerical performance on compact intervals, however, ultimately results in such numerical algorithms being suboptimal. We show that "Taylor-like" bounds valid outside of the interval of approximation may be derived for the important case of $e^{x}$ and Chebyshev polynomials on $[-1,1]$ and provide links to substantial numerical applications.

\end{abstract}

\maketitle

\section{Introduction}
Traditionally, Taylor expansions have been of central interest in mathematics. Efforts to find generalizations can be traced back to the seminal paper of Widder \cite{widder27}, where a generalization from the monomial basis to a basis of general functions is discussed. Over the years, other types of generalizations, which include links to asymptotic series and generalized functions, have arisen \cite{stan96, Odibat07, masjed18}. On the other hand, series derived based on projections onto an orthogonal basis in a given $L^{2}$ space, due to their success in applications, have been studied heavily by the approximation theory community \cite{Carlson74, zhong00, costin16}. Due to the inherent locality of the results, these series are seldom used in situations, in which an unbounded interval is present, as one rather makes use of a simple form of the remainder term, which is provided by Taylor's expansion. Motivated by the preceding, an estimate commonly available only for Taylor's expansion by virtue of its simple remainder form is derived for the Chebyshev series expansion.

To further motivate the upcoming discussion, let $e^{x}$ be expressed using its Maclaurin series centered at $0$ as
\begin{equation}\label{eq_taylor_of_ex}
e^{x}=\sum_{n=0}^{N}\frac{x^{n}}{n!}+\frac{x^{n+1}e^{\xi}}{\left(n+1\right)!},    
\end{equation}
where $N\in\mathbb{N}$ and $\xi\in\left(0,x\right)$ for $x>0$ and $\xi\in\left(x,0\right)$ for $x<0$. By inspecting the remainder term in \eqref{eq_taylor_of_ex} one can readily derive the following upper and lower bound for the exponential
\begin{equation}\label{eq_bounds_of_exp_Taylor}
\sum_{n=0}^{N}\frac{x^{n}}{n!}\leq e^{x}\leq\sum_{n=0}^{N+1}\frac{x^{n}}{n!}\text{ for any odd }N\in\mathbb{N}\text{ and }x<0.
\end{equation}
Because of the prominent role of the exponential, these bounds find frequent use in literature \cite{bakshi2023learning, Monera12, bader19, Nilsson15}, resulting in guarantees of convergence and accuracy. The quality of these bounds and the numerical performance of the resulting algorithm is also affected by the precision of the approximation on a bounded interval in the uniform norm \cite{bakshi2023learning, wodecki2024learningquantumhamiltonianstemperature}. This gives strong motivation to derive such bounds for non-Taylor $L^2$-based approximations.

However, bounds of the type \eqref{eq_bounds_of_exp_Taylor} for non-Taylor expansions have not been discussed in literature as approximation theory typically limits itself to the interval on which the approximation is constructed \cite{xie23,Madan87, zhang21, xiang10}. A motivating example of such an expansion is the Chebyshev expansion with respect to the space $L^{2}\left(\left[-1,1\right],\frac{1}{\sqrt{1-x^{2}}}dx\right)$. This approximation is known to provide a near min-max uniform norm approximation of a smooth function on $\left[-1,1\right]$, which turns out to be better than the Taylor approximation with respect to the supremum norm \cite{sach14, Yevick05}. Recalling the identity theorem \cite{stein03}, which states that any two analytical functions, which agree on an accumulation point, must agree in all of $\mathbb{C}$, one may notice that any polynomial series approximating $e^{x}$ on an interval of arbitrary size must converge to this function on all of $\mathbb{R}$. Thus, for any polynomial series, it is reasonable to ask the question: what is the behavior of this approximation outside of the interval of approximation and can some claims about the relationship between the approximation and the original function be made?

Due to the nature of non-monomial polynomials bases, it is not reasonable to expect the estimate to follow from a simple reminder term as in \eqref{eq_taylor_of_ex}. Nevertheless, the utilization of the properties of expansion coefficients and Chebyshev polynomials, one is able to arrive at an analogous estimate for the Chebyshev series (Theorem \ref{thm:main-theorem}). A simple graphical illustration of the theorem can be found in Figure \ref{fig:exp-cheb}.

\begin{figure}
    \centering
    \includegraphics[width=0.8\textwidth]{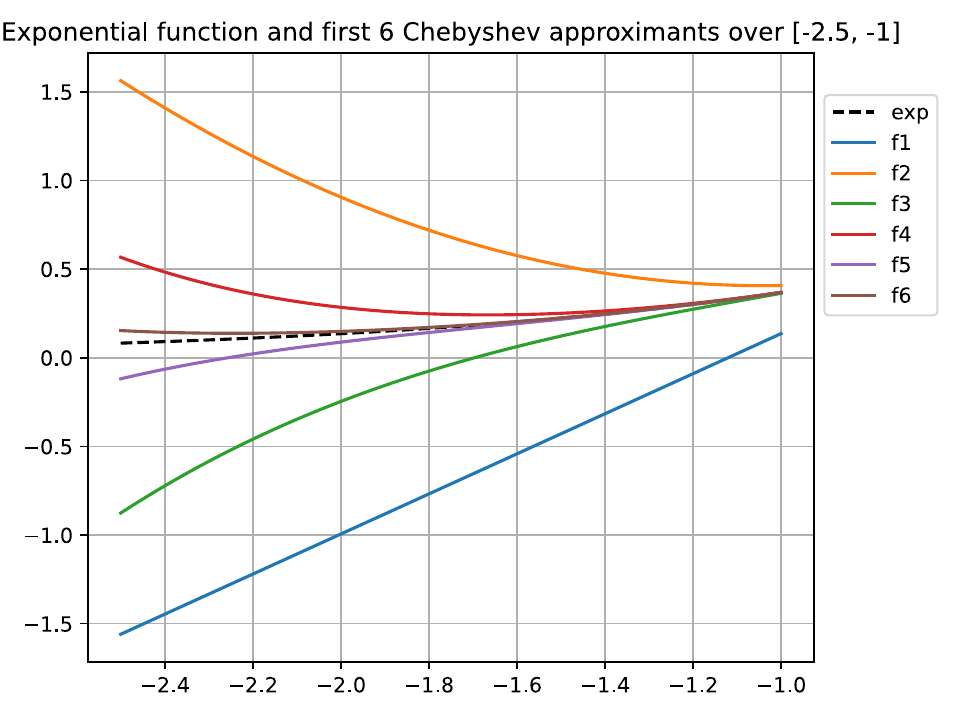}
    \caption{A numerical example, which shows that the odd and even degrees indeed do provide a lower and upper bound outside of the domain $[-1,1]$ of $L^2$ approximation, respectively.}
    \label{fig:exp-cheb}
\end{figure}

\section{Useful Identities}
The following proofs hinge on a couple of well known properties of Chebyshev Polynomials and Bessel functions, the following Lemma summarizes these.
\begin{lemma}
Let 
\begin{equation}
I_{v}\left(z\right)\coloneqq i^{-v}J_{v}\left(ix\right)=\sum_{m=0}^{\infty}\frac{1}{m!\Gamma\left(m+v+1\right)}\left(\frac{x}{2}\right)^{2m+v}    
\end{equation}
denote the modified Bessel function of the first kind and 
\begin{align}
T_{n}\left(z\right)\coloneqq\frac{n}{2}\sum_{k=0}^{\left\lfloor \frac{n}{2}\right\rfloor }\left(-1\right)^{k}\frac{\left(n-k-1\right)!}{k!\left(n-2k\right)!}\left(2z\right)^{n-2k} ,\\
U_{n}\left(z\right)\coloneqq\sum_{k=0}^{\left\lfloor \frac{n}{2}\right\rfloor }\left(-1\right)^{k}\left(\begin{array}{c}
n-k\\
k
\end{array}\right)\left(2z\right)^{n-2k}
\end{align}

denote the Chebyshev polynomials of the first kind and second kind, respectively. Then the following identities and estimates hold
\begin{align}
\label{eq_chebyshev_derivative}
T_{n}^{\prime}\left(x\right)=nU_{n-1}\left(x\right), &&\text{where }x\in\mathbb{R},\\
\label{eq_chebyshev_1st_second}
U_{n}\left(x\right)=\begin{cases}
2\sum_{\text{odd }j}^{n}T_{j}\left(x\right) & \text{for odd }j,\\
2\left(\sum_{\text{even }j}^{n}T_{j}\left(x\right)\right)-1 & \text{for even }j, 
\end{cases} &&\text{where }x\in\mathbb{R},\\
\label{eq_mod_bessel_id}
vI_{v}\left(x\right)=\frac{x}{2}\left(I_{v-1}\left(x\right)+I_{v+1}\left(x\right)\right) &&\text{ for }x>0,v>-\frac{1}{2},  \\
\label{eq_bessel_lower}
I_{v}\left(x\right)>\frac{1}{\Gamma\left(v+1\right)}\left(\frac{x}{2}\right)^{v}\text{ for }x>0,v>-\frac{1}{2}, \\
\label{eq_bessel_upper}
I_{v}\left(x\right)<\frac{\cosh x}{\Gamma\left(v+1\right)}\left(\frac{x}{2}\right)^{v} &&\text{ for }x>0,v>-\frac{1}{2}.
\end{align}
\end{lemma}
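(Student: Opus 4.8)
\emph{Plan.} The five assertions are all classical, so my proof would be a verification organized by type: the two Chebyshev relations \eqref{eq_chebyshev_derivative}--\eqref{eq_chebyshev_1st_second} I would obtain from the trigonometric parametrization and then promote to all of $\mathbb{R}$ by a polynomial-identity argument, while the three Bessel statements \eqref{eq_mod_bessel_id}--\eqref{eq_bessel_upper} I would read off directly from the defining power series for $I_v$.

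For \eqref{eq_chebyshev_derivative}, I would write $x=\cos\theta$ with $\theta\in(0,\pi)$, so that $T_n(\cos\theta)=\cos n\theta$ and $U_{n-1}(\cos\theta)=\sin(n\theta)/\sin\theta$. Differentiating $T_n(\cos\theta)=\cos n\theta$ in $\theta$ gives $-\sin\theta\,T_n'(\cos\theta)=-n\sin n\theta$, i.e.\ $T_n'(\cos\theta)=n\,\sin(n\theta)/\sin\theta=nU_{n-1}(\cos\theta)$. This proves the identity for $x\in(-1,1)$, and since both sides are polynomials agreeing on an interval they agree for every $x\in\mathbb{R}$. For \eqref{eq_chebyshev_1st_second} I would use the product-to-sum identity $2\cos(j\theta)\sin\theta=\sin((j+1)\theta)-\sin((j-1)\theta)$: summing over the odd (resp.\ even) indices $j$ up to $n$ telescopes, leaving $\sin((n+1)\theta)$ (in the even case after the leftover $\sin\theta$ from the $j=0$ term cancels against the stray $-1$), and dividing by $\sin\theta$ reproduces $U_n(\cos\theta)$; the polynomial-identity argument again removes the restriction to $[-1,1]$.

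For the Bessel statements I would work entirely from $I_v(x)=\sum_{m\ge0}\frac{1}{m!\,\Gamma(m+v+1)}(x/2)^{2m+v}$. The recurrence \eqref{eq_mod_bessel_id} follows by substituting the series for each of $I_{v-1},I_v,I_{v+1}$ and reindexing $m\mapsto m-1$ in the shifted sum, after which the coefficients match term by term; only mild care with the indices and signs is needed. The lower bound \eqref{eq_bessel_lower} is the most immediate: the $m=0$ term is exactly $\frac{1}{\Gamma(v+1)}(x/2)^v$, every remaining term is strictly positive when $x>0$ and $v>-\tfrac12$ (so that $\Gamma(m+v+1)>0$), and dropping the tail gives the strict inequality. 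For the upper bound \eqref{eq_bessel_upper} I would factor out $(x/2)^v$ and compare the residual series $\sum_m \frac{(x/2)^{2m}}{m!\,\Gamma(m+v+1)}$ against $\frac{\cosh x}{\Gamma(v+1)}=\frac{1}{\Gamma(v+1)}\sum_m \frac{x^{2m}}{(2m)!}$ coefficient by coefficient; the comparison reduces to $\frac{(2m)!}{4^m m!}\cdot\frac{\Gamma(v+1)}{\Gamma(m+v+1)}=\frac{\prod_{k=1}^m(k-\frac12)}{\prod_{k=1}^m(v+k)}$, which equals $1$ at $m=0$ and is strictly less than $1$ for $m\ge1$ since $v>-\tfrac12$ forces $v+k>k-\tfrac12>0$.

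I expect the only genuine work to sit in \eqref{eq_bessel_upper}: one has to guess the correct majorant ($\cosh x$ rather than, say, $e^x$) and then verify the Pochhammer-ratio inequality $\prod_k(k-\tfrac12)\le\prod_k(v+k)$, which is where the hypothesis $v>-\tfrac12$ is really used. The remaining steps are routine, the only other places to be attentive being the sign bookkeeping in the three-term recurrence \eqref{eq_mod_bessel_id} and the appeal to the polynomial-identity principle needed to extend \eqref{eq_chebyshev_derivative} and \eqref{eq_chebyshev_1st_second} beyond $[-1,1]$.
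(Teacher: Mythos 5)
Your route is genuinely different from the paper's: the paper dismisses the Chebyshev identities as ``direct computation'' and simply cites Abramowitz--Stegun for the recurrence and Luke for the two bounds, whereas you give a self-contained verification of everything. Four of your five arguments are correct and complete in outline: the trigonometric parametrization plus the polynomial-identity principle handles \eqref{eq_chebyshev_derivative} and \eqref{eq_chebyshev_1st_second} (your telescoping, including the cancellation of the $j=0$ leftover against the $-1$, is right); dropping the tail of the series gives \eqref{eq_bessel_lower}; and your coefficientwise comparison for \eqref{eq_bessel_upper}, which reduces to $\prod_{k=1}^{m}\left(k-\tfrac12\right)<\prod_{k=1}^{m}\left(v+k\right)$ for $m\ge 1$, is exactly the computation that makes the hypothesis $v>-\tfrac12$ do its work. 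This buys something the paper's citation proof does not: the lemma becomes verifiable without consulting the references.

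However, your treatment of \eqref{eq_mod_bessel_id} contains a genuine failure: the coefficients do \emph{not} ``match term by term,'' because the identity as stated, with a plus sign, is false for modified Bessel functions. Carrying out the substitution you describe, the coefficient of $\left(\frac{x}{2}\right)^{2m+v}$ in $\frac{x}{2}I_{v-1}(x)$ is $\frac{1}{m!\,\Gamma(m+v)}$ and in $\frac{x}{2}I_{v+1}(x)$, after the reindexing $m\mapsto m-1$, it is $\frac{1}{(m-1)!\,\Gamma(m+v+1)}$, so for $m\ge 1$ (the $m=0$ term checks separately)
\begin{equation*}
\frac{1}{m!\,\Gamma(m+v)}-\frac{1}{(m-1)!\,\Gamma(m+v+1)}=\frac{(m+v)-m}{m!\,\Gamma(m+v+1)}=\frac{v}{m!\,\Gamma(m+v+1)},
\end{equation*}
which is the coefficient of $vI_{v}(x)$. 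The \emph{sum} $I_{v-1}+I_{v+1}$ instead produces $\frac{2m+v}{m!\,\Gamma(m+v+1)}$, i.e.\ $\frac{x}{2}\left(I_{v-1}(x)+I_{v+1}(x)\right)=xI_{v}^{\prime}(x)$. So the identity provable by your method is $vI_{v}(x)=\frac{x}{2}\left(I_{v-1}(x)-I_{v+1}(x)\right)$; the plus-sign version is the recurrence for the ordinary Bessel functions $J_{v}$, not for $I_{v}$. This is in fact a sign typo in the lemma itself --- in the proof of the reduction theorem the paper uses the minus-sign form, rewriting $4(2n+1)I_{2n+1}(1)$ as $2\left(I_{2n}(1)-I_{2n+2}(1)\right)$ --- and an honest execution of your series computation would have caught it. As written, your assertion that the stated identity checks out, with ``only mild care with the indices and signs,'' is the one step of your proposal that fails, and the sign is precisely where it fails.
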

\begin{proof}
\eqref{eq_chebyshev_derivative} - \eqref{eq_chebyshev_1st_second} follow by direct computation from the definitions, the identity \eqref{eq_mod_bessel_id} can be derived by applying trigonometric identities \cite{abra74} and \eqref{eq_bessel_upper}, \eqref{eq_bessel_lower} are proven in \cite{LUKE197241}.    
\end{proof}

Before the main theorem is proven, a simple estimate that finds use in the subsequent root isolation process is provided.

\begin{lemma}[Modified Bessel function estimate]\label{lem_mod_bessel_function_est}
    The following esimate holds for modified Bessel functions of the first kind
    \begin{equation}\label{eq_general_estimate_for_ratio_of_bes}
        \frac{I_{n+1}\left(x\right)}{I_{n}\left(x\right)}\leq\frac{\cosh\left(x\right)x}{2\left(n+1\right)},
    \end{equation}
    where $x>0$ and $n\in\mathbb{N}$.
\end{lemma}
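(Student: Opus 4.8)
The plan is to obtain the bound directly by sandwiching the numerator and the denominator of the ratio using the two-sided estimates \eqref{eq_bessel_lower} and \eqref{eq_bessel_upper} established in the preceding lemma. Since $n\in\mathbb{N}$, both indices $v=n$ and $v=n+1$ satisfy the requirement $v>-\tfrac{1}{2}$, so both estimates are legitimately applicable for every $x>0$. No use of the recurrence \eqref{eq_mod_bessel_id} is needed; attempting to iterate it would introduce the harder ratio $I_{n+2}(x)/I_n(x)$, so the two monotone bounds are the cleaner route.

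Concretely, I would first apply the upper bound \eqref{eq_bessel_upper} with $v=n+1$ to control the numerator,
\begin{equation*}
I_{n+1}(x)<\frac{\cosh x}{\Gamma(n+2)}\left(\frac{x}{2}\right)^{n+1},
\end{equation*}
and then the lower bound \eqref{eq_bessel_lower} with $v=n$ to control the denominator from below,
\begin{equation*}
I_{n}(x)>\frac{1}{\Gamma(n+1)}\left(\frac{x}{2}\right)^{n}>0.
\end{equation*}
Dividing the first estimate by the second (the strict positivity of the lower bound ensures the direction of the inequality is preserved) and invoking the functional equation $\Gamma(n+2)=(n+1)\Gamma(n+1)$ collapses the Gamma factors to $1/(n+1)$ and the ratio of powers $(x/2)^{n+1}/(x/2)^{n}$ to a single factor $x/2$, yielding exactly $\frac{x\cosh x}{2(n+1)}$.

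There is no serious obstacle here: the estimate is essentially a one-step consequence of the already-cited bounds, and the only bookkeeping is the cancellation in the quotient of Gamma functions together with the powers of $x/2$. I would merely remark that the argument in fact produces the \emph{strict} inequality, which is slightly stronger than the stated $\leq$; retaining $\leq$ is harmless and is convenient for the root-isolation step in which the lemma is subsequently invoked.
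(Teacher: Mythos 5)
Your proposal is correct and is essentially identical to the paper's own proof: the paper likewise bounds the numerator by \eqref{eq_bessel_upper} with $v=n+1$ and the denominator from below by \eqref{eq_bessel_lower} with $v=n$, then cancels the Gamma (factorial) factors and the powers of $x/2$ to obtain $\frac{x\cosh x}{2(n+1)}$. Your observations that the ratio is in fact strictly bounded and that the recurrence \eqref{eq_mod_bessel_id} is unnecessary are accurate but do not change the argument.
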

\begin{proof}
Applying the estimates \ref{eq_bessel_lower} and \ref{eq_bessel_upper} to overestimate the fraction yields
\begin{equation}
\frac{I_{n+1}\left(x\right)}{I_{n}\left(x\right)}\leq\frac{\frac{\cosh x}{\left(n+1\right)!}\left(\frac{x}{2}\right)^{n+1}}{\frac{1}{n!}\left(\frac{x}{2}\right)^{n}}\leq\frac{\cosh\left(x\right)x}{2\left(n+1\right)}.    
\end{equation}
\end{proof}

\begin{corollary}
The result \eqref{eq_bessel_upper} is commonly simplified as $\cosh\left(x\right)<e^{x}$, resulting in a less sharp estimate for the fraction of subsequent Bessel functions for small $x$
\begin{equation}\label{eq_bessel_not_sharp_enough}
\frac{I_{n+1}\left(x\right)}{I_{n}\left(x\right)}\leq\frac{e^{x}x}{2\left(n+1\right)}.
\end{equation}
It will be shown that the sharper estimate can be leveraged along with Theorem \ref{thm_reduction_theorem} to achieve better results. Specializing the result of Lemma \ref{lem_mod_bessel_function_est} to $x=1$ one obtains
\begin{equation}\label{eq_bessel_sharp_enough}
\frac{I_{n+1}\left(1\right)}{I_{n}\left(1\right)}\leq\frac{4}{5\left(n+1\right)},
\end{equation}
which is notably sharper than the bound $\frac{e}{2\left(n+1\right)}$, which follows from \eqref{eq_bessel_not_sharp_enough}.
\end{corollary}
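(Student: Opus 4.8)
The plan is to obtain all three assertions by applying elementary estimates directly to Lemma~\ref{lem_mod_bessel_function_est}, so that no new machinery beyond \eqref{eq_general_estimate_for_ratio_of_bes} is required. First I would establish \eqref{eq_bessel_not_sharp_enough} by noting that for every $x>0$ one has $\cosh(x)=\tfrac{e^{x}+e^{-x}}{2}<\tfrac{e^{x}+e^{x}}{2}=e^{x}$, using $e^{-x}<e^{x}$. Substituting this into the right-hand side of the bound \eqref{eq_general_estimate_for_ratio_of_bes} supplied by Lemma~\ref{lem_mod_bessel_function_est} immediately yields the (weaker) estimate $\frac{I_{n+1}(x)}{I_{n}(x)}\leq\frac{e^{x}x}{2(n+1)}$.

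Next, to obtain the sharp specialization \eqref{eq_bessel_sharp_enough}, I would set $x=1$ in Lemma~\ref{lem_mod_bessel_function_est}, which gives $\frac{I_{n+1}(1)}{I_{n}(1)}\leq\frac{\cosh(1)}{2(n+1)}$. It then suffices to show $\frac{\cosh(1)}{2}\leq\frac{4}{5}$, that is $\cosh(1)\leq\frac{8}{5}$, or equivalently $e+e^{-1}\leq\frac{16}{5}=3.2$. Using the elementary enclosure $2.71<e<2.72$ (hence $e^{-1}<0.37$) gives $e+e^{-1}<3.09<3.2$, which closes the inequality and therefore establishes \eqref{eq_bessel_sharp_enough}.

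Finally, the claimed sharpness of \eqref{eq_bessel_sharp_enough} relative to the bound $\frac{e}{2(n+1)}$ produced by \eqref{eq_bessel_not_sharp_enough} reduces to the single scalar comparison $\frac{4}{5}<\frac{e}{2}$, i.e. $\frac{8}{5}<e$, which is evident from $e>2.7>1.6$. Combining these three elementary steps proves the corollary in full.

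The only part that requires any care -- and hence the nominal ``main obstacle'' -- is the rational verification $\cosh(1)<\frac{8}{5}$, since it is exactly the gap between $\cosh(1)\approx1.543$ and $\frac{8}{5}=1.6$ that makes the factor $\frac{4}{5}$ admissible; any standard two-digit enclosure of $e$ suffices to confirm it, so the remainder of the argument is purely mechanical.
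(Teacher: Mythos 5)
Your proposal is correct and follows essentially the same route as the paper: the paper's corollary is itself just the specialization of Lemma~\ref{lem_mod_bessel_function_est} (replace $\cosh(x)$ by $e^{x}$ for \eqref{eq_bessel_not_sharp_enough}, set $x=1$ and round $\cosh(1)/2\approx 0.77$ up to $4/5$ for \eqref{eq_bessel_sharp_enough}). Your only addition is to make the numerical verification $\cosh(1)\leq 8/5$ explicit via the enclosure $2.71<e<2.72$, which the paper leaves implicit.
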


The equioscillation theorem does not permit a bound that covers $\left(-\infty,0\right)$ as this would introduce a contradiction with the near min-max quality of the Chebyshev approximation, therefore the following theorem is derived for $\left(-\infty,-1\right)$.

\section{Proof of Main Theorem}

We start by proving a lemma, which is the stepping stone to the final proof.

\begin{lemma}[Reduction theorem]\label{thm_reduction_theorem} Let $N\in \mathbb{N}$ and a let 
\begin{equation}\label{eq_cheby_approx_of_exponential}
f_{n}=\sum_{n=0}^{N}a_{n}T_{n}\left(x\right)   
\end{equation}
be the $N$-th Chebyshev approximation on $L^{2}\left(\left[-1,1\right],\frac{1}{\sqrt{1-x^{2}}}dx\right)$. Then 
\begin{equation}\label{eq_upper_bound_cheb}
e^{x}\leq\sum_{n=0}^{N}a_{n}T_{n}\left(x\right)\end{equation}
holds for even $n$ on $\left(-\infty,-1\right)$ if the polynomial
\begin{equation}\label{eq_key_polynomial_for_thm}
G_{N}\left(x\right)=I_{N+1}\left(1\right)U_{N-1}\left(x\right)+I_{N}\left(1\right)U_{N-2}\left(x\right)-I_{N}\left(1\right)+I_{N}\left(1\right)T_{N}\left(x\right)
\end{equation}
is positive on $\left(-\infty,-1\right)$ and 
\begin{equation}
e^{x}\geq\sum_{n=0}^{N}a_{n}T_{n}\left(x\right)\end{equation}
holds for odd $n$ on $\left(-\infty,-1\right)$ if $G_{N}\left(x\right)$ is negative on $\left(-\infty,-1\right)$.
    
\end{lemma}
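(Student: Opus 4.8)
The plan is to pass from the transcendental inequality to a statement about the single polynomial $G_N$ by exploiting the fact that $e^x$ is a fixed point of differentiation. First I would record that the coefficients in \eqref{eq_cheby_approx_of_exponential} are the modified Bessel values $a_0=I_0(1)$ and $a_n=2I_n(1)$ for $n\ge 1$, so that the remainder is $R_N(x):=e^x-f_N(x)=2\sum_{n=N+1}^{\infty}I_n(1)T_n(x)$, an entire function whose sign on $(-\infty,-1)$ is exactly what must be controlled.

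\textbf{Reduction to a first-order relation.} Since $(e^x)'=e^x$, the remainder satisfies $R_N'-R_N=f_N-f_N'$, so everything is governed by the polynomial $P_N:=f_N-f_N'$. I would compute $P_N$ explicitly: applying $T_n'=nU_{n-1}$ from \eqref{eq_chebyshev_derivative} and re-expressing $f_N$ in the basis of second-kind polynomials, the three-term recurrence \eqref{eq_mod_bessel_id} forces every interior coefficient to cancel, leaving only the two boundary terms $P_N=I_{N+1}(1)U_{N-1}(x)+I_N(1)U_N(x)$. Using $U_N=U_{N-2}+2T_N$, a consequence of \eqref{eq_chebyshev_1st_second}, this rearranges to $P_N=G_N+I_N(1)\bigl(T_N(x)+1\bigr)$, which is the bridge between $P_N$ and the polynomial $G_N$ in the statement.

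\textbf{Monotonicity and the endpoint.} I would then study $\phi(x):=e^{-x}R_N(x)$, for which $\phi'(x)=e^{-x}P_N(x)$. Writing $x=-\cosh t$ with $t>0$ gives $T_N(x)=(-1)^N\cosh(Nt)$, so for even $N$ the term $I_N(1)(T_N+1)$ is positive; hence positivity of $G_N$ on $(-\infty,-1)$ yields $P_N>0$ there and $\phi$ is strictly increasing. Since the leading term of $f_N$ makes $f_N(x)\to+\infty$ as $x\to-\infty$ for even $N$, one gets $\phi(x)\to-\infty$, while at the endpoint $R_N(-1)=2\sum_{n=N+1}^{\infty}(-1)^nI_n(1)\le 0$ by the alternating series test, the $I_n(1)$ being monotonically decreasing to $0$. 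An increasing $\phi$ that is $\le 0$ at $-1$ is $\le 0$ throughout $(-\infty,-1)$, i.e.\ $e^x\le f_N(x)$. For odd $N$ every sign reverses: $T_N+1\le 0$, negativity of $G_N$ gives $P_N<0$ and $\phi$ decreasing, $\phi\to+\infty$ at $-\infty$, and $R_N(-1)\ge 0$, yielding $e^x\ge f_N(x)$.

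\textbf{Main obstacle.} The delicate point is the exact algebraic collapse $f_N-f_N'=I_{N+1}(1)U_{N-1}+I_N(1)U_N$: one must verify that the recurrence \eqref{eq_mod_bessel_id} annihilates all interior coefficients and leaves precisely the boundary terms that reconstitute $G_N$, since it is this finiteness that converts an infinite-series sign question into a polynomial positivity question. A secondary but essential subtlety is that covering the \emph{entire} half-line requires combining the interior information (monotonicity from the sign of $G_N$) with two facts that hold for free, namely the forced sign of the boundary value $R_N(-1)$ and the growth of $f_N$ at $-\infty$; omitting either would leave the sign of $R_N$ undetermined near one end of the interval.
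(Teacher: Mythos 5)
Your proof is correct, and it takes a genuinely different route from the paper's, so a comparison is in order. Both arguments share the same algebraic engine: using $T_n'=nU_{n-1}$, the expansion of $U_n$ in first-kind polynomials, and the recurrence $2nI_n(1)=I_{n-1}(1)-I_{n+1}(1)$ to collapse $f_N-f_N'$ into a short closed form. But the paper then argues via critical points: at any extremal point $y$ of $g_N=f_N-e^x$ one has $g_N'(y)=0$, so the collapsed identity forces $g_N(y)$ to equal the fixed polynomial; combining its sign with $g_N(-1)\geq 0$ and the growth of $g_N$ as $x\to-\infty$ yields the bound. You instead run an integrating-factor (Gronwall-type) argument: the sign of $P_N=f_N-f_N'$ controls the monotonicity of $\phi=e^{-x}\bigl(e^x-f_N\bigr)$, which you anchor at the endpoint via the alternating tail $R_N(-1)=2\sum_{n>N}(-1)^nI_n(1)$. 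Your mechanism is cleaner: it needs no extremal-point analysis and, in fact, no information at $-\infty$ at all --- a monotone $\phi$ with $\phi(-1)\leq 0$ (resp.\ $\geq 0$) already has the right sign on the whole half-line --- so the ``secondary subtlety'' you flag about the growth of $f_N$ at $-\infty$ is redundant in your own setup, though it is genuinely needed in the paper's.

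A further point in your favor: your identity $f_N-f_N'=I_{N+1}(1)U_{N-1}+I_N(1)U_N=G_N+I_N(1)\bigl(T_N+1\bigr)$ is the correct one, whereas the paper's chain concludes $g_N(y)=G_N(y)$ at critical points, i.e.\ $f_N-f_N'=G_N$, which is off by exactly your correction term. (Check $N=2$: $f_2-f_2'=4I_2x^2+2I_3x-I_2$, while $G_2=2I_2x^2+2I_3x-I_2$, the difference being $2I_2x^2=I_2\bigl(T_2+1\bigr)$.) Both versions still prove the stated lemma because, as you verify explicitly with the substitution $x=-\cosh t$, the term $I_N(1)\bigl(T_N+1\bigr)$ is nonnegative on $(-\infty,-1)$ for even $N$ and nonpositive for odd $N$, so it reinforces the hypothesized sign of $G_N$; but that sign check is essential to your argument and your algebra is the sound one.
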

\begin{proof}
Let $N$ be even and define
\begin{equation}
g_{N}\left(x\right)\coloneqq\sum_{n=0}^{N}a_{n}T_{n}\left(x\right)-e^{x}.    
\end{equation}
Since $N$ is even, $g_{N}\left(x\right)\rightarrow+\infty$ when $x\rightarrow+\infty$ and thus the following conditions ensure that \eqref{eq_upper_bound_cheb} holds:
\begin{itemize}
    \item  $g_{N}\left(-1\right)\geq0$,
    \item $g_{N}\left(x_{0}\right)$ is positive at all extremal points $x_{0}<-1$.
\end{itemize}

Since the chebyshev polynomials at $x=-1$ satisfy $T_{n}\left(-1\right)=\left(-1\right)^{n}$ one may write

\begin{equation}
e^{-1}=a_{0}+\sum_{n=1}^{\infty}a_{n}\left(-1\right)^{n}=a_{0}+\sum_{n=1}^{\infty}\left[-a_{2n-1}+a_{2n}\right],    
\end{equation}
Using the fact that the Chebyshev coefficients in \eqref{eq_cheby_approx_of_exponential} satisfy
\begin{equation}\label{eq_chebyshev_coefficients}
a_{0}=I_{0}\left(1\right),\quad a_{n}=2I_{n}\left(1\right)\text{ for n\ensuremath{\geq1}}
\end{equation}
and $I_{n}\left(x\right)\geq I_{n+1}\left(x\right)$ one arrives at
\begin{equation}
e^{-1}\leq a_{0}+\sum_{n=1}^{\frac{N}{2}}\left[-a_{2n-1}+a_{2n}\right] \quad \text{for every odd $N$}.
\end{equation}
This implies that $g_{N}\left(-1\right)\geq0$ for any even $N$.

To find the condition for the positivity of $g_{N}\left(x_{0}\right)$ at all extremal points $x_{0}<-1$ suppose that $y$ is an extremal point. Then 
\begin{equation}
0=g_{N}^{\prime}\left(y\right)=\sum_{n=0}^{N}a_{n}T_{n}^{\prime}\left(y\right)-e^{y}=\sum_{n=0}^{N-1}\left(n+1\right)a_{n+1}U_{n}\left(y\right)-e^{y},
\end{equation}
where the relationship \eqref{eq_chebyshev_derivative} was used. To make the association with the original series the relationship \eqref{eq_chebyshev_1st_second} is applied, yielding
\begin{align}
f_{n}^{\prime}\left(x\right)=&\sum_{n=0}^{\frac{N-2}{2}}\left(2n+1\right)a_{2n+1}U_{2n}\left(x\right)+\sum_{n=1}^{\frac{N}{2}}2na_{2n}U_{2n-1}\left(x\right) \\
=&\sum_{n=0}^{\frac{N-2}{2}}\left(2n+1\right)a_{2n+1}2\left(\sum_{j=0,j\text{ even}}^{n}T_{j}\left(x\right)-1\right)+\sum_{n=1}^{\frac{N}{2}}2na_{2n}2\sum_{j=0,j\text{ odd}}^{n}T_{j}\left(x\right)\\
=&\sum_{n=0}^{\frac{N-2}{2}}\left[\underbrace{\left(\sum_{j=0,j\text{ even}}^{2n}2\left(2n+1\right)a_{2n+1}T_{j}\left(x\right)\right)}_{I.}-\underbrace{2\left(2n+1\right)a_{2n+1}}_{II.}\right]
\\ & +\underbrace{\sum_{n=1}^{\frac{N}{2}}4na_{2n}\sum_{j=0,j\text{ odd}}^{2n}T_{j}\left(x\right)}_{III.}.
\end{align}
Rewriting each of the terms using \eqref{eq_chebyshev_coefficients} and \eqref{eq_mod_bessel_id} leads to
\begin{align}
I.=&\sum_{j=0,j\text{ even}}^{N-2}T_{j}\left(x\right)\sum_{\frac{j}{2}=n}^{\frac{N-2}{2}}4\left(2n+1\right)I_{2n+1}\left(1\right) \\
=&\sum_{j=0,j\text{ even}}^{N-2}T_{j}\left(x\right)2\sum_{\frac{j}{2}=n}^{\frac{N-2}{2}}\left(I_{2n}\left(1\right)-I_{2n+2}\left(1\right)\right),
\\
=&\sum_{j=0,j\text{ even}}^{N-2}2\left(I_{j}-I_{N}\right)T_{j}\left(x\right)\\
II.=&-\sum_{n=0}^{\frac{N-2}{2}}\left(I_{2n}\left(1\right)-I_{2n+2}\left(1\right)\right)=-I_{0}+I_{N}, \\
III.=&2\sum_{j=1,j\text{ odd}}^{N-1}T_{j}\left(x\right)\sum_{\frac{j+1}{2}=n}^{\frac{N}{2}}2\left(2n\right)I_{2n+1}\left(1\right) \\
=&2\sum_{j=1,j\text{ odd}}^{N-1}T_{j}\left(x\right)\sum_{\frac{j+1}{2}=n}^{\frac{N}{2}}\left(I_{2n-1}\left(1\right)-I_{2n+1}\left(1\right)\right) \\
=& 2\sum_{j=1,j\text{ odd}}^{N-1}\left(I_{j}\left(1\right)-I_{N+1}\left(1\right)\right)T_{j}\left(x\right).
\end{align}
Combining these results yields
\begin{align}
I.+II.+III.=&\sum_{j=0}^{N-1}a_{j}T_{j}\left(x\right)-I_{N+1}\left(1\right)2\sum_{j=1,j\text{ odd}}^{N-1}T_{j}\left(x\right)  \\
&-\sum_{j=0,j\text{ even}}^{N-2}2I_{N}\left(1\right)T_{j}\left(x\right)+I_{N}\left(1\right)T_{0}\left(x\right),
\end{align}
which shows that 
\begin{equation}\label{eq_identity_for_derivative_at_ext}
0=g_{N}^{\prime}\left(y\right)=g_{N}\left(y\right)-G_{N}\left(y\right),,    
\end{equation}
where 
\begin{equation}\label{eq_final_identity}
G_{N}\left(y\right)\coloneqq I_{N+1}\left(1\right)2\sum_{j=1,j\text{ odd}}^{N-1}T_{j}\left(y\right)+I_{N}\left(1\right)2\sum_{j=0,j\text{ even}}^{N-2}T_{j}\left(y\right)-I_{N}\left(1\right)+I_{N}\left(1\right)T_{N}\left(y\right)  
\end{equation}
and $y$ is the extremal point. Applying \eqref{eq_chebyshev_1st_second} and rearranging \eqref{eq_identity_for_derivative_at_ext} yields
\begin{equation}
g_{N}\left(y\right)=G_{N}\left(y\right)=I_{N+1}\left(1\right)U_{N-1}\left(y\right)+I_{N}\left(1\right)U_{N-2}\left(y\right)-I_{N}\left(1\right)+I_{N}\left(1\right)T_{N}\left(y\right).    
\end{equation}
Thus, if $G_{N}$ is positive on $\left(-\infty,-1\right)$ it is guaranteed that $g_{N}$ is positive in all extremal points of the interval and therefore $e^{x}\leq\sum_{n=0}^{N}a_{n}T_{n}\left(x\right)$. The proof for odd $N$ is analogous.
\end{proof}

The preceding lemma is utilized to finally prove that we have a bound on $e^{x}$ valid over all of $(-\infty,-1)$.
\begin{theorem}(Chebyshev bound on $e^{x}$)\label{thm:main-theorem} The estimate
\begin{equation}\label{eq:main-bound}
\sum_{n=0}^{2N-1}a_{n}T_{n}\left(x\right)\leq e^{x}\leq\sum_{n=0}^{2N}a_{n}T_{n}\left(x\right)    
\end{equation}
holds for any $N \in \mathbb{N}$ and any $x \in (-\infty,-1)$.
\end{theorem}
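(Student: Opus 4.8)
The plan is to reduce the whole statement to a sign analysis of the single auxiliary polynomial $G_N$ furnished by the Reduction Theorem (Lemma~\ref{thm_reduction_theorem}). The upper bound in \eqref{eq:main-bound} is the even truncation of degree $2N$ and the lower bound is the odd truncation of degree $2N-1$, so it suffices to prove that $G_{2N}>0$ on $(-\infty,-1)$ and that $G_{2N-1}<0$ there; the two conclusions of Lemma~\ref{thm_reduction_theorem} then assemble into \eqref{eq:main-bound}.

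The obstacle is that the three Chebyshev polynomials entering \eqref{eq_key_polynomial_for_thm} each grow without bound and oscillate in sign on $(-\infty,-1)$, so their dominant parts nearly cancel. To expose the cancellation I would substitute $x=-\cosh\theta$ with $\theta>0$, under which $T_n(x)=(-1)^n\cosh(n\theta)$ and $U_n(x)=(-1)^n\sinh((n+1)\theta)/\sinh\theta$. Dividing \eqref{eq_key_polynomial_for_thm} by $I_N(1)$ and setting $r_N:=I_{N+1}(1)/I_N(1)$ rewrites $G_N/I_N(1)$ as a combination of $\sinh(N\theta)/\sinh\theta$, $\sinh((N-1)\theta)/\sinh\theta$ and $\cosh(N\theta)$. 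The decisive algebraic step is the product-to-sum identity $\sinh((N-1)\theta)+\sinh\theta\cosh(N\theta)=\sinh(N\theta)\cosh\theta$, which collapses the hyperbolic terms and produces the exact closed form
\[
\frac{G_N(x)}{I_N(1)}=(-1)^N\bigl(\cosh\theta-r_N\bigr)\frac{\sinh(N\theta)}{\sinh\theta}-1 .
\]

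With this closed form the signs become transparent. Since $\cosh\theta>1>r_N$ and $\sinh(N\theta)/\sinh\theta>0$, for odd $N$ both summands are negative and $G_N<0$ with no further input, which yields the lower bound. For even $N$ the first summand is positive and I must show it overcomes the residual $-1$. Using $\cosh\theta\ge1$ together with the elementary bound $\sinh(N\theta)/\sinh\theta=U_{N-1}(\cosh\theta)\ge U_{N-1}(1)=N$, the right-hand side is at least $(1-r_N)N-1$, which is positive precisely when $r_N<1-1/N$. This is exactly where the sharpened Bessel-ratio estimate \eqref{eq_bessel_sharp_enough}, $r_N\le\tfrac{4}{5(N+1)}$, is needed: one checks $\tfrac{4}{5(N+1)}<\tfrac{N-1}{N}$ for every $N\ge2$, so $G_{2N}>0$ on $(-\infty,-1)$ and the upper bound follows. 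The smallest indices, where $U_{N-2}$ degenerates, need no separate treatment, since $\sinh 0=0$ keeps the factorization valid verbatim.

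I expect the main difficulty to be securing the exact factorization rather than a merely asymptotic one: the leading behaviour of $G_N$ on $(-\infty,-1)$ arises from the near-cancellation of $I_{N+1}(1)U_{N-1}$ against the $I_N(1)$-terms, so any estimate coarser than the telescoping identity above would destroy the sign. Once the collapse to $(\cosh\theta-r_N)\sinh(N\theta)/\sinh\theta$ is in hand, the only remaining quantitative point is dominating the constant $-1$ in the even case, and the sharp ratio bound $r_N\le\tfrac{4}{5(N+1)}$ is what makes this margin hold comfortably for all $N\ge2$.
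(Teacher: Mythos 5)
Your proposal is correct, and after the shared opening moves it follows a genuinely different and markedly simpler route than the paper. Both arguments invoke the Reduction Theorem (Lemma~\ref{thm_reduction_theorem}) and the substitution $x=-\cosh t$, $t>0$, with the hyperbolic forms of $T_n$ and $U_n$; the paper, however, then clears denominators, rewrites everything as a polynomial in $e^t$, factors it as $(e^{2Nt}-1)\left(e^{2t}-2\tfrac{I_{N+1}}{I_N}e^{t}+1\right)-2(-1)^N e^{Nt}(e^{2t}-1)$, divides by $e^t-1$, and splits the result into five groups $A,B,C,D,E$ with tunable parameters $a,b,c,d$ whose positivity is checked by discriminant analysis of the quadratics \eqref{eqn:quad_et}. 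Your telescoping identity $\sinh((N-1)\theta)+\sinh\theta\cosh(N\theta)=\sinh(N\theta)\cosh\theta$ produces the closed form
\begin{equation*}
\frac{G_N(-\cosh\theta)}{I_N(1)}=(-1)^N\left(\cosh\theta-r_N\right)\frac{\sinh(N\theta)}{\sinh\theta}-1,\qquad r_N=\frac{I_{N+1}(1)}{I_N(1)},
\end{equation*}
which is algebraically the same object as the paper's factorization (divide theirs by $4e^{(N+1)t}\sinh t$), but your sign analysis from that point is far more direct: for odd $N$ negativity is immediate from $r_N<1$, and for even $N$ the bound $\sinh(N\theta)/\sinh\theta\geq N$ reduces everything to the clean criterion $r_N<\tfrac{N-1}{N}$, verified for $N\geq 2$ by the sharp ratio estimate \eqref{eq_bessel_sharp_enough}. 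What your approach buys is the elimination of the parameter tuning and discriminant bookkeeping, an explicit quantitative threshold on $r_N$, and a clear localization of where the sharp Bessel bound is actually needed (only the even case; the odd case needs nothing beyond monotonicity of $I_n$); what the paper's decomposition buys is a more mechanical template that does not require spotting the product-to-sum collapse. Two small points you should make explicit in a final write-up: the inequality $\sinh(N\theta)/\sinh\theta\geq N$ deserves a one-line justification (e.g.\ $\sinh(N\theta)/\sinh\theta=\sum_{k=0}^{N-1}e^{(N-1-2k)\theta}$, whose terms pair up to give at least $N$), and $r_N<1$ should be tied either to the monotonicity $I_{N+1}(1)<I_N(1)$ or to \eqref{eq_bessel_sharp_enough}; neither is a gap, merely an elision.
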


\begin{proof}
An elementary proof of the positivity of polynomials in \eqref{eq_key_polynomial_for_thm} is given. To deal with the alternating signs, we prove the positivity of $(-1)^NG_N(x)$ on $(-\infty,-1)$ instead. We need to recall two additional facts about Chebyshev polynomials:
\begin{subequations}
    \begin{itemize}
        \item the Chebyshev polynomials relate the trigonometric functions and their value evaluated at positive integer multiples of arguments. Similar relations also hold for hyperbolic functions, more succinctly 
        \begin{align}
        T_n(\cosh(\theta))&=\cosh(n\theta),\\
        U_n(\cosh(\theta))&=\frac{\sinh((n+1)\theta)}{\sinh(\theta)}.
        \end{align}
        \item the Chebyshev polynomials have alternating sign symmetries, namely \begin{align}
            T_n(-x)&=(-1)^nT_n(x),\\
            U_n(-x)&=(-1)^nU_n(x).
        \end{align}
    \end{itemize}
\end{subequations}

Keeping the above in mind, \eqref{eq_key_polynomial_for_thm} is multiplied by $(-1)^N$ and alternating symmetries are applied to obtain \begin{equation}
    (-1)^N G_N(x)= -I_{N+1}U_{N-1}\left(-x\right)+I_{N}U_{N-2}\left(-x\right)-I_{N}(-1)^N+I_{N}T_{N}\left(-x\right).
\end{equation}
For notational simplicity we just write $I_N$ for $I_N(1)$. Since $x\in(-\infty,-1)$, let's introduce the substitution $x=-\cosh(t)$ where $t\in(0,+\infty)$ and this gives : 
\begin{equation}
    \begin{split}
        (-1)^N G_N\left(-\cosh(t)\right) &= -I_{N+1}U_{N-1}\left(\cosh(t)\right)+I_{N}U_{N-2}\left(\cosh(t)\right)\\
            &-I_{N}(-1)^N+I_{N}T_{N}\left(\cosh(t)\right)\\
            &= -I_{N+1}\frac{\sinh(Nt)}{\sinh(t)}+I_N\frac{\sinh\left((N-1)t\right)}{\sinh(t)}\\
            &-I_N(-1)^N+I_N\cosh(Nt)
    \end{split}
\end{equation}
We multiply both sides by $\sinh(t)$ to clear out the fractions and note that $\sinh(t)$ does not effect the sign i.e. $\tilde{G}_N(t):=(-1)^NG_N(-\cosh(t))\sinh(t)$ has the same sign as $(-1)^NG_N(-\cosh(t))$. Thus, it's equivalent to prove that $\tilde{G}_N(t)$ is positive for all $t\in(0,+\infty)$. We also note that $\tilde{G}_N(t)$ will be a linear combination of exponential functions $e^{m t}$ with integer $m$'s, because of the nature of hyperbolic functions. The exponential with smallest exponent is $e^{-(N+1)t}$, which is due to the $I_N\cosh(Nt)\sinh(t)$ term that appears in $\tilde{G}_N$, hence we multiply $\tilde{G}_N(t)$ again by $e^{(N+1)t}$ (this operation doesn't affect the sign either because exponential functions are positive) so that we can express $\tilde{G}_N(t)e^{(N+1)t}$ as a polynomial in $e^{t}$. We compute : \begin{align*}
    \tilde{G}_N(t)e^{(N+1)t}&=-I_{N+1}\sinh(Nt)e^{(N+1)t}+I_N\sinh((N-1)t)e^{(N+1)t}\\
    &-I_N(-1)^N\sinh(t)e^{(N+1)t}+I_N\cosh(Nt)\sinh(t)e^{(N+1)t}\\
    &=-I_{N+1}\frac{e^{(2N+1)t}-e^{t}}{2}+I_N\frac{e^{2Nt}-e^{2t}}{2}\\
    &-I_N(-1)^N\frac{e^{(N+2)t}-e^{Nt}}{2}+I_N\frac{e^{(2N+2)t}-e^{2Nt}+e^{2t}-1}{4}
\end{align*}
To finally conclude positivity we need to have control over the relative size of $I_{N+1}$ and $I_{N}$, because the highest order term $e^{(2N+2)t}$ has coefficient $I_N$ while the second highest has coefficient $-I_{N+1}$. Fortunately, we have the estimate $\frac{I_{N+1}}{I_N}\leq \frac{4}{5(N+1)}$ which turns out to be crucial later. Let's transform the equality into the following way : \begin{align*}
    & 4\frac{\tilde{G}_N(t)e^{(N+1)t}}{I_N}\\
    =&e^{(2N+2)t}-2\frac{I_{N+1}}{I_N}e^{(2N+1)t}+e^{2Nt}-2(-1)^Ne^{(N+2)t}+2(-1)^Ne^{Nt}\\
    &-e^{2t}+2\frac{I_{N+1}}{I_N}e^{t}-1\\
    =&(e^{2Nt}-1)\left(e^{2t}-2\frac{I_{N+1}}{I_{N}}e^{t}+1\right)-2(-1)^{N}e^{Nt}(e^{2t}-1)
\end{align*}
Now we divide both sides by $e^t-1$ which is positive for $t>0$ and use the formula $1+r+\dots+r^n=\frac{r^{n+1}-1}{r-1}$, we get 
\begin{subequations}
    \begin{align}
        \frac{4\tilde{G}_N(t)e^{(N+1)t}}{I_N(e^t-1)}&=\frac{e^{2Nt}-1}{e^t-1}\left(e^{2t}-2\frac{I_{N+1}}{I_{N}}e^{t}+1\right)-2(-1)^Ne^{Nt}(e^t+1)\\
        &=\sum_{k=0}^{2N-1}e^{tk}\left(e^{2t}-2\frac{I_{N+1}}{I_{N}}e^{t}+1\right)-2(-1)^Ne^{Nt}(e^t+1)\\
        &=A+B+C+D+E
    \end{align}
\end{subequations}
Where $A,B,C,D,E$ are \begin{align}
    A&=\sum_{k=0,k\notin\{N-2,N-1,N,N+1\}}^{2N-1}e^{tk}\left(e^{2t}-2\frac{I_{N+1}}{I_{N}}e^{t}+1\right)\\
    B&=e^{(N+1)t}\left(e^{2t}-2\frac{I_{N+1}}{I_{N}}e^{t}+(1-a)\right)\\
    C&=e^{Nt}\left(e^{2t}+\left(-2\frac{I_{N+1}}{I_{N}}-b\right)e^{t}+1\right)\\
    D&=e^{(N-1)t}\left(e^{2t}+\left(-2\frac{I_{N+1}}{I_{N}}-c\right)e^{t}+1\right)\\
    E&=e^{(N-2)t}\left((1-d)e^{2t}-2\frac{I_{N+1}}{I_{N}}e^{t}+1\right)
\end{align}
with $a+b=2(-1)^N$,$c+d=2(-1)^N$ which are some parameters that we will choose later. Remark that from $A$, we exclude the summing terms with $k\in\{N-2,N-1,N,N+1\}$ and $B,C,D,E$ correspond to the situation where $k\in\{N+1,N,N-1,N-2\}$ and they absorb into them the term $-2(-1)^Ne^{Nt}(e^t+1)$ that appeared in the original representation of $\frac{4\tilde{G}_N(t)e^{(N+1)t}}{I_N(e^t-1)}$. Now we should tune the parameters $a,b,c,d$ correctly so that each of the following quadratic terms
\begin{subequations}\label{eqn:quad_et}
    \begin{align}
        &e^{2t}-2\frac{I_{N+1}}{I_{N}}e^{t}+1,\\
        &e^{2t}-2\frac{I_{N+1}}{I_{N}}e^{t}+(1-a),\\
        &e^{2t}+\left(-2\frac{I_{N+1}}{I_{N}}-b\right)e^{t}+1,\\
        &e^{2t}+\left(-2\frac{I_{N+1}}{I_{N}}-c\right)e^{t}+1,\\
        &(1-d)e^{2t}-2\frac{I_{N+1}}{I_{N}}e^{t}+1,
    \end{align}    
\end{subequations}
is positive for $t>0$, then $A,B,C,D,E$ will all be positive and thus $\frac{4\tilde{G}_N(t)e^{(N+1)t}}{I_N(e^t-1)}$ will be positive and this will imply the positivity of $(-1)^NG_N(x)$. 

Consider $a=d=2\frac{I_{N+1}}{I_N}$ and $b=c=2(-1)^N-2\frac{I_{N+1}}{I_{N}}$, let's analyze the quadratic functions \eqref{eqn:quad_et} in $e^t$:\begin{enumerate}
    \item For $e^{2t}-2\frac{I_{N+1}}{I_{N}}e^{t}+1$, the discriminant is $\Delta=4\left(\frac{I_{N+1}}{I_{N}}\right)^2-4$ since $\frac{I_{N+1}}{I_{N}}\leq \frac{4}{5(N+1)}\rightarrow0,N\rightarrow\infty$, $\Delta<0$ eventually when $N$ is large enough therefore $e^{2t}-2\frac{I_{N+1}}{I_{N}}e^{t}+1>0,\forall t>0$ for large enough $N$;
    \item For $e^{2t}-2\frac{I_{N+1}}{I_{N}}e^{t}+(1-a)$ and $(1-d)e^{2t}-2\frac{I_{N+1}}{I_{N}}e^{t}+1$, the discriminants are the same and are equal to $\Delta=4\left(\frac{I_{N+1}}{I_{N}}\right)^2-4\left(1-2\frac{I_{N+1}}{I_N}\right)<0$, eventually for large enough $N$. Therefore \eqref{eqn:quad_et} (b) and (e) are positive for all $t>0$ when eventually $N$ is large enough;
    \item With the choice $b=c=2(-1)^N-2\frac{I_{N+1}}{I_{N}}$, $e^{2t}+\left(-2\frac{I_{N+1}}{I_{N}}-b\right)e^{t}+1=e^{2t}-2(-1)^Ne^t+1=(e^t-(-1)^N)^2\geq0,\forall t>0$. Therefore \eqref{eqn:quad_et} (c) and (d) are positive for all $t>0$.
\end{enumerate}
As a result $\frac{4\tilde{G}_N(t)e^{(N+1)t}}{I_N(e^t-1)}=A+B+C+D+E>0,\forall t>0$ when eventually $N$ is large enough.

Therefore, we can conclude the desired positivity which holds for large enough $N$, an estimate of the smallest such $N$ can be found by keeping track of the locations where we deduce negativity of discriminants by using the estimate $\frac{I_{N+1}}{I_N}\leq \frac{4}{5(N+1)}$. Let's derive the least $N$, there are two discriminants that involve $\frac{I_{N+1}}{I_N}$ and let's keep in mind that $\frac{I_{N+1}}{I_N}\in (0,\frac{4}{5(N+1)}]$:\begin{enumerate}
    \item $\Delta=4\left(\frac{I_{N+1}}{I_N}\right)^2-4$ : \begin{align*}
        & 4\left(\frac{I_{N+1}}{I_N}\right)^2-4 < 0\\
        \impliedby&\frac{I_{N+1}}{I_N} < 1 \impliedby \frac{4}{5(N+1)}<1\\
        \impliedby&N\geq 1
    \end{align*}
    \item $\Delta=4\left(\frac{I_{N+1}}{I_{N}}\right)^2-4\left(1-2\frac{I_{N+1}}{I_N}\right)$ : \begin{align*}
        & 4\left(\frac{I_{N+1}}{I_{N}}\right)^2-4\left(1-2\frac{I_{N+1}}{I_N}\right) < 0\\
        \impliedby&\frac{I_{N+1}}{I_N} \in (-1-\sqrt{2},-1+\sqrt{2})\\
        \impliedby& (0,\frac{4}{5(N+1)}]\subset (-1-\sqrt{2},-1+\sqrt{2})\\
        \impliedby& \frac{4}{5(N+1)} < -1+\sqrt{2} \impliedby N > \frac{4}{5(-1+\sqrt{2})}-1\approx 0.93
    \end{align*}
\end{enumerate}
Since the estimates above hold for $N\geq 1$ the result follows.
\end{proof}

\bibliographystyle{amsplain}



\providecommand{\bysame}{\leavevmode\hbox to3em{\hrulefill}\thinspace}
\providecommand{\MR}{\relax\ifhmode\unskip\space\fi MR }
\providecommand{\MRhref}[2]{%
  \href{http://www.ams.org/mathscinet-getitem?mr=#1}{#2}
}
\providecommand{\href}[2]{#2}


\end{document}